\newtheorem{theorem}{Theorem}[section]
\theoremstyle{definition}
\newtheorem{example}[theorem]{Example}
\newcounter{theoremintro}
\newtheorem{theoremi}[theoremintro]{Theorem}
\newcommand{\cZ}{{\mathcal Z}}
\newcommand{\sL}{{\mathscr L}}
\newcommand{\fA}{{\sf A}}
\newcommand{\fS}{{\sf S}}
\newcommand{\Cb}{{\mathbb C}}
\newcommand{\Zb}{{\mathbb Z}}
\newcommand{\Nb}{{\mathbb N}}
\newcommand{\Rb}{{\mathbb R}}
\newcommand{\tr}{\tau}
\newcommand{\eps}{\varepsilon}
\numberwithin{equation}{section}
\DeclareMathOperator{\Sym}{Sym}
\DeclareMathOperator{\supp}{supp}
\DeclareMathOperator{\trs}{tr}
\DeclareMathOperator{\ev}{ev}
\DeclarePairedDelimiter\ceil{\lceil}{\rceil}
\DeclarePairedDelimiter\floor{\lfloor}{\rfloor}
\begin{document}

\title{McDuff factors from amenable actions and dynamical alternating groups}

\author{David Kerr}
\address{David Kerr,
Mathematisches Institut,
University of M{\"u}nster, 
Einsteinstr.\ 62, 
48149 M{\"u}nster, Germany}
\email{kerrd@uni-muenster.de}

\author{Spyridon Petrakos}
\address{Spyridon Petrakos,
Mathematisches Institut,
University of M{\"u}nster, 
Einsteinstr.\ 62, 
48149 M{\"u}nster, Germany}
\email{spetrako@uni-muenster.de}

\date{November 14, 2023}

\begin{abstract}
Given a topologically free action of a countably infinite amenable group on
the Cantor set, we prove that, for every subgroup $G$ of the topological full group 
containing the alternating group, the group von Neumann algebra $\sL G$ is a McDuff factor.
This yields the first examples of nonamenable simple finitely generated groups $G$ for which $\sL G$ is McDuff.
Using the same construction we show moreover that if a faithful action $G\curvearrowright X$ of a countable group
on a countable set with no finite orbits 
is amenable then the crossed product of the associated shift action over a given II$_1$ factor is a McDuff factor. 
In particular, if $H$ is a nontrivial countable ICC group
and $G\curvearrowright X$ is a faithful amenable action of a countable ICC group on a countable set
with no finite orbits, then the group von Neumann algebra 
of the generalized wreath product $H\wr_X G$ is a McDuff factor.
Our technique can also be applied to show
that if $H$ is a nontrivial countable group and $G\curvearrowright X$ is an amenable action
of a countable group on a countable set with no finite orbits then the 
generalized wreath product $H\wr_X G$ is Jones--Schmidt stable.
\end{abstract}

\maketitle

\section{Introduction}

In operator algebra theory central sequences have long played a significant role 
in addressing problems in and around amenability, having been used both as a mechanism 
for producing various examples beyond the amenable horizon
and as a point of leverage for teasing out the finer structure of amenable operator algebras themselves.
In the early 1940s Murray and von Neumann exhibited (sticking to the separable realm, as we do henceforth)
the first example of a II$_1$ factor nonisomorphic to the hyperfinite
II$_1$ factor $R$ by showing that the free group factor $\sL F_2$, unlike $R$, 
does not possess nontrivial central sequences, i.e., does not have what they called {\it property Gamma} \cite{MurvNe43}.
In the late 1960s McDuff employed central sequences and an iterated group-theoretic construction
to engineer an uncountable infinity of pairwise nonisomorphic II$_1$ factors \cite{McD69}. Shortly thereafter she
gave a characterization of II$_1$ factors admitting a pair of central sequences that asymptotically 
noncommute as those which tensorially absorb $R$, i.e., those that have the {\it McDuff property} \cite{McD70}.
A bit later in the 1970s Connes put property Gamma to work in the proof of his theorem
that injectivity implies hyperfiniteness, a cornerstone in the classification of injective 
von Neumann algebras \cite{Con76}.
On the topological side,
central sequences (both in operator and tracial norms) have proven their utility many times over in the 
corresponding Elliott classification program for simple separable nuclear 
C$^*$-algebras, starting in the 1990s and with increasing intensity over the last decade.
For instance, versions of property Gamma and the McDuff property formulated in terms of the uniform trace norm
were critical ingredients in recent work on the Toms--Winter conjecture 
\cite{CasEviTikWhiWin21,CasEviTikWhi22}, one outcome of which
was the equivalence of finite nuclear dimension and $\cZ$-stability 
(tensorial absorption of the Jiang--Su algebra) for nonelementary simple separable unital nuclear C$^*$-algebras.
This equivalence permitted one to install the relatively tractable property of $\cZ$-stability as the 
operative regularity hypothesis in the final classification theorem \cite{GonLinNiu20,EllGonLinNiu15,TikWhiWin17}
and cemented its position as the C$^*$-algebraic analogue of being McDuff.

Many ICC groups will give rise to II$_1$ factors with property Gamma or the McDuff property on account
of asymptotic commutativity relations within the group itself, which can be arranged by taking 
products and/or suitable inductive limit constructions (the ICC property---which asks that the conjugacy class of 
every nontrivial element be infinite---guarantees factoriality, and indeed is equivalent to it by a result of Murray and von Neumann). 
Coming up with examples of simple finitely generated groups
that yield such II$_1$ factors is more difficult. The problem of identifying when an infinite group is simple and finitely generated 
can itself be a delicate task, but there is at least one rich source of examples coming from dynamics, namely the 
alternating groups $\fA (\Gamma ,X)$ of minimal subshift actions $\Gamma\curvearrowright X$ 
of countably infinite groups on the Cantor set \cite{Nek17}. 
These are subgroups of the topological full group (i.e., the group of homeomorphisms locally implemented 
by elements of the acting group) that, in the case of many acting groups $\Gamma$ including $\Zb$,
are known to coincide with the commutator subgroup. 
Juschenko and Monod proved that the topological full group
of a minimal $\Zb$-action on the Cantor set is always amenable, which, by passing to the commutator subgroup
and specializing to subshift actions, gave the first examples of amenable infinite simple finitely generated groups \cite{JusMon13}.
When $\Gamma$ is not virtually cyclic, however, the alternating group of a minimal subshift action
can fail to be amenable \cite{EleMon13,Szo21,KerTuc23}. 
Nevertheless, the first author and Tucker-Drob showed that if $\Gamma$ is amenable
and the action is topologically free then for every subgroup of the topological full containing the alternating group 
the group von Neumann algebra (which is always a II$_1$ factor in this case) has property Gamma \cite{KerTuc23}.
The goal of the present paper is to strengthen this last conclusion to the McDuff property:

\begin{theoremi}\label{T-McDuff}
Let $\Gamma\curvearrowright X$ be a topologically free continuous action of a countably 
infinite amenable discrete group on
the Cantor set, and let $G$ be a subgroup of the topological full group $[[\Gamma\curvearrowright X]]$
containing the alternating group $\fA (\Gamma , X)$. Then the von Neumann algebra $\sL G$
is a McDuff II$_1$ factor.
\end{theoremi}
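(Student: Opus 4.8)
The plan is to verify McDuff's criterion. Recall that a separable II$_1$ factor $M$ is McDuff precisely when its central sequence algebra $M^\omega\cap M'$ (for a fixed free ultrafilter $\omega$) is noncommutative, and that by a standard reindexing argument it is enough to produce, for every finite $\Omega\subseteq M$ and every $\eps>0$, unitaries $u,v\in M$ with $\|[u,w]\|_2<\eps$ and $\|[v,w]\|_2<\eps$ for all $w\in\Omega$ but $\|[u,v]\|_2\geq 1$. Since the hypotheses already force $\sL G$ to be a II$_1$ factor with property Gamma (the latter by \cite{KerTuc23}), I would carry this out with $\Omega=\{\lambda_h:h\in\Omega_0\}$ for a finite $\Omega_0\subseteq G$, and then let $\Omega_0$ exhaust $G$ and $\eps\to0$ along a diagonal sequence at the end.

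For the first unitary $u$ I would reuse, essentially verbatim, the property Gamma construction behind \cite{KerTuc23}. Choosing a finite $S\subseteq\Gamma$ and a clopen partition $\mathcal P$ of $X$ so that every $h\in\Omega_0$ is implemented by $S$ relative to $\mathcal P$, amenability of $\Gamma$ together with topological freeness yields a \emph{F{\o}lner castle}: a small clopen set $W$ (small enough that each relevant translate of $W$ lies in a single $\mathcal P$-atom) and a highly $(S,\delta)$-invariant finite set $T\subseteq\Gamma$ with the ``columns'' $tW$ ($t\in T$) pairwise disjoint and arranged so that each $h\in\Omega_0$ carries all but a $\delta$-proportion of these columns onto other columns via the structure maps. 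I would then take $u$ to be the equidistributed average $u=|\mathcal Z|^{-1/2}\sum_{c\in\mathcal Z}\lambda_c$ over the set $\mathcal Z$ of three-cycles of castle columns, viewed as elements of $\fA(\Gamma,X)\subseteq G$. Conjugation by $\lambda_h$ ($h\in\Omega_0$) then permutes $\mathcal Z$ up to an $O(\delta)$-proportion of exceptions, so $\|[u,\lambda_h]\|_2=O(\sqrt\delta)$, while $\tau(u)=0$ and $\|u\|_2=1$. A spectral truncation at a large fixed level---legitimate because the law of $u$ is controlled uniformly in the castle and tends to one with bounded moments---replaces $u$ by a genuinely bounded element keeping these estimates up to $O(\eps)$.

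The hard part will be the second unitary $v$: it must be asymptotically central in the same sense while $\|[u,v]\|_2$ remains bounded below. A second ``flat'' average over small dynamical generators will not work, because two such generators with disjoint supports commute and disjointness is generic, which forces $[u,v]\to0$. My proposal is to build the castle with an auxiliary internal layer---obtained by quasitiling the index set $T$ by translates of a fixed F{\o}lner block in the sense of Ornstein--Weiss, and/or by passing to a product refinement of the column base $W$---with respect to which one can define a second $\Omega_0$-almost-invariant family of dynamical generators whose interaction with the first family persists under the normalised averaging, and then to feed this family through the same equidistributed-average-plus-truncation recipe to obtain $v$ with $\|[u,v]\|_2$ bounded away from $0$ (here one would really use that $\mathrm{Alt}_5$, hence $\sL(\mathrm{Alt}_5)$, is noncommutative). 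Letting $\delta\to0$, the castle grow, and $\Omega_0$ exhaust $G$, a diagonal argument then yields two genuine central sequences of unitaries in $\sL G$ with non-vanishing asymptotic commutator, and McDuff's theorem gives $\sL G\cong\sL G\,\bar\otimes\,R$.

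I expect the single main obstacle to be precisely the construction of this layered castle adapted to $\Omega_0$: one must keep the second generator family $\Omega_0$-almost-invariant (so that $v$ is asymptotically central) yet entangle it enough with the first family (so that $[u,v]$ survives the normalised averaging), and in the non-abelian amenable case one must in addition absorb the conjugation corrections that the F{\o}lner transport of castle columns introduces when $\Gamma$ is noncommutative. This is the step where amenability of $\Gamma$ is used essentially---through the existence of invariant F{\o}lner sets and Ornstein--Weiss quasitilings---and it should parallel the construction underlying the crossed-product and generalized-wreath-product results stated in the abstract, where the analogous role is played by amenability of the action on the countable set.
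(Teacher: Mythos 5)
There is a genuine gap, and it sits exactly where you yourself locate it: the construction of the second almost-central element and the lower bound on its commutator with the first. Everything up to that point (Følner castles adapted to $\Omega_0$, almost invariance of elements supported on castle columns, a diagonal argument at the end) is routine and was already present in the property Gamma argument of \cite{KerTuc23}; the whole content of the theorem is the missing step, and your proposal only gestures at it (``auxiliary internal layer'', ``entangle it enough with the first family'') without specifying the second family, the averaging, or any estimate showing that $\|[u,v]\|_2$ stays bounded below as $\delta\to 0$ and the castle grows. Worse, the general shape of your plan --- build two families of group elements out of the castle and pass to normalised averages, hoping the noncommutativity survives --- is precisely the kind of strategy the authors report being unable to make work: averaging or tensoring over many approximately independent Følner coordinates tends to wash out commutators, and their numerical experiments suggest that the two almost-central elements produced this way asymptotically commute even when the seeds are far from commuting. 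You acknowledge this failure mode for disjointly supported generators but do not give a mechanism that avoids it. There is also a secondary soft spot: $u=|\mathcal Z|^{-1/2}\sum_{c\in\mathcal Z}\lambda_c$ is not a unitary, its operator norm grows with $|\mathcal Z|$, and the claim that a spectral truncation preserves both the centrality estimates and a commutator lower bound is asserted rather than proved (and is unnecessary: McDuff's criterion only needs bounded central sequences with non-vanishing commutator).

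For comparison, the paper's proof avoids averaging altogether. Inside $\fA(\Gamma,X)$ it embeds $\fA(D)^T$ for a Følner set $T$ and a large finite set $D$, and works in the finite-dimensional algebra $\sL\fA(D)$, whose Artin--Wedderburn summands have size at least $|D|-1$. There one chooses commuting, approximately independent projections $p,q$ of trace close to $2^{-1/((1-\delta)|T|)}$ (i.e.\ extremely close to $1$) and a partial isometry $v$ with $v^*v=p$, $vv^*=q$, $vpq=pq$. Copying $v$ as an elementary tensor $\tilde v_S$ over the Følner core $S\subseteq T$ gives a single element whose adjoint serves as the second sequence member: $\|[\tilde v_S,\tilde v_S^*]\|_2^2=2\big(\trs(p)^{|S|}-\trs(pq)^{|S|}\big)\approx 2(\tfrac12-\tfrac14)$, while conjugation by any $u_h$, $h\in\Omega$, only permutes the tensor coordinates and moves at most $\delta|T|$ of them out of $S$; since each coordinate contributes a factor $\trs(pq)$ close to $1$, this costs at most $8(1-\trs(pq)^{\delta|T|})\leq\eps$ in $\|\cdot\|_2^2$. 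So the noncommutativity is hard-wired into one seed element and its adjoint rather than recovered from the interaction of two averaged families, which is the idea your proposal is missing.
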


Applying the above result to the free minimal expansive actions constructed in \cite{EleMon13,Szo21}
we obtain the first examples of nonamenable simple finitely generated groups whose von Neumann algebra
is a McDuff factor. The topologically free minimal expansive actions constructed in Section~8 of \cite{KerTuc23}
give us moreover uncountably many pairwise nonisomorphic such groups.

We will actually show something a little more general (see Theorem~\ref{T-McDuff stronger}). The virtue of formulating
Theorem~\ref{T-McDuff} as we have is that the groups $G$ in question are automatically ICC.

The argument in \cite{KerTuc23} for deriving property Gamma makes use  of 
finite permutational wreath products inside of $\fA (\Gamma , X)$
that can be expressed spectrally as permutational Bernoulli actions $S_F \curvearrowright \{ 0,1 \}^F$
indexed by F{\o}lner sets $F$ of $\Gamma$.
A set of measure one half is constructed in each of the Bernoulli spaces $\{ 0,1 \}^F$
via a summation condition on the coordinates that takes into account the F{\o}lner boundary effect.
This set is shown to be approximately invariant using the central limit theorem, as was done by
Kechris and Tsankov in \cite{KecTsa08} for the different purpose of obtaining a characterization 
of amenability for actions in terms of the existence of approximately invariant sets of measure one half
for the associated generalized Bernoulli actions.
The corresponding projection $e$ in the group von Neumann algebra 
is then approximately central to within a prescribed tolerance, yielding property Gamma.
A natural strategy for boosting this to the McDuff property would be 
to take the tensor factors in the group algebra of the wreath product
to be something noncommutative instead of the algebra $\Cb^2$ sitting over the original Bernoulli base $\{ 0,1 \}$,
reinterpreting the original binary alternative as a choice of a seed projection $p$ in these (common) tensor factors,
and then choosing a second seed projection $q$ that is far from commuting with $p$ and 
using it in the same way as $p$ to construct another almost central projection $f$.
We have been unable to determine, however, if such seed projections $p$ and $q$ can be found
so that the corresponding $e$ and $f$ asymptotically noncommute as the F{\o}lner sets $F$ become more and more invariant.
In fact we suspect, on the basis of numerical computations carried out for us by Giles Gardam, that such $e$ and $f$
will always asymptotically commute, even when $p$ and $q$ are approximately freely related.

What we have discovered is that one can dispense with the above probabilistic approach altogether
and instead start with a projection $p$ in the noncommutative Bernoulli base
which has trace extremely close to $1$, close enough so that if we copy it out into a single elementary tensor 
over the F{\o}lner core of the set $F$ then we will obtain a projection $\tilde{p}$ with trace approximately one half. This requires
that the Bernoulli base be a finite-dimensional $*$-subalgebra $B$ of $\sL G$ of very large dimension. 
Some basic representation theory for finite alternating groups (guaranteeing that $B$ can be chosen with enough
noncommutativity) then enables us to construct a partial
isometry $v$ in $B$ such that if we copy it into an elementary tensor $\tilde{v}$ over the F{\o}lner core of $F$,
just like we did to $p$ in order to produce $\tilde{p}$, then we will have $\tilde{v}^* \tilde{v} = \tilde{p}$
and the projections $\tilde{v} \tilde{v}^*$ and $\tilde{p}$ will commute and be approximately independent,
which implies that the commutator $[\tilde{v},\tilde{v}^*]$ is bounded away from zero in trace norm.
Finally one observes that both $\tilde{v}$ and $\tilde{v}^*$ are approximately central, with the tolerance being controlled
by the approximate invariance of the set $F$. From this we conclude the McDuff property. 

Our construction can also be applied to establish a connection to amenability for actions
in the spirit of Kechris and Tsankov, only now via the McDuff property  
for the crossed product of shift actions over a II$_1$ factor. In this setting there is also an obstruction 
related to inner amenability of the group that prevents one from obtaining a full characterization of amenability for the action.
Recall that a group action $G\curvearrowright X$
on a set is {\it amenable} if there is a finitely additive $G$-invariant probability measure on $X$,
or equivalently if there is a state on $\ell^\infty (X)$ for the induced $G$-action.
Every action of an amenable group is amenable, but many nonamenable groups
admit nontrivial and even faithful amenable actions (see \cite{vDo90} for the case of 
free groups and \cite{Tuc20} for further discussion and references).
A group $G$ is {\it inner amenable} if there exists a finitely additive atomless 
probability measure on $G\setminus \{ e \}$ which is invariant under the conjugation action of $G$,
which in the case that $G$ is ICC simply means that the conjugation action 
$G\curvearrowright G\setminus \{ e \}$ is amenable.
Inner amenability fails for free groups on two or more generators 
but does hold for many nonamenable groups. 
It is implied by property Gamma \cite{Eff75} (so that the groups $G$ in Theorem~\ref{T-McDuff} all satisfy it,
as was already shown in \cite{KerTuc23})
but is strictly weaker \cite{Vae12}.

If the action $G\curvearrowright X$ has a finite orbit then it is amenable for obvious reasons and the factoriality condition on the
crossed product below fails, and so this case is naturally omitted from the theorem statement.

\begin{theoremi}\label{T-McDuff shift}
Let $M$ be a II$_1$ factor with trace $\tr$. Let $G\curvearrowright X$ 
be an amenable action of a countable group on a countable set,
and suppose that the action has no finite orbits. Suppose furthermore that the crossed product $M^{\overline{\otimes} X} \rtimes G$
of the associated shift action 
$G\curvearrowright M^{\overline{\otimes} X}$ is a II$_1$ factor (which will be the case, for example, when 
the action $G\curvearrowright X$ is faithful). Then $M^{\overline{\otimes} X} \rtimes G$ is McDuff.
\end{theoremi}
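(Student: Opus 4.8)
The plan is to verify the McDuff property through its standard central-sequence characterization \cite{McD70}: for a II$_1$ factor $N$, being McDuff is equivalent to the asymptotic centralizer $N'\cap N^\omega$ (for a free ultrafilter $\omega$ on $\Nb$) being noncommutative. Writing $N=M^{\overline{\otimes} X}\rtimes G$, I would therefore aim to exhibit a single bounded sequence $(\tilde v_n)_n$ of partial isometries in $N$ that is asymptotically central --- $\|[\tilde v_n,y]\|_2\to 0$ for every $y\in N$ --- but whose class $w\in N'\cap N^\omega$ satisfies $ww^*\neq w^*w$; the hypothesis that $N$ is a factor enters precisely at the invocation of McDuff's theorem. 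This is the shift-action incarnation of the mechanism behind Theorem~\ref{T-McDuff}. Since $G\curvearrowright X$ is amenable with no finite orbits, there is a F{\o}lner sequence $(F_n)_n$ of finite subsets of $X$ with $|F_n|\to\infty$: for each $g\in G$ one has $|gF_n\setminus F_n|/|F_n|\to 0$, and $|gF_n|=|F_n|$ since $g$ acts bijectively on $X$.

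For each $n$ set $k_n=\lceil |F_n|/\ln 2\rceil$ and fix a unital copy of $M_{k_n}(\Cb)$ inside $M$ with matrix units $(e_{ij})$ --- this is the only use of $M$ being a factor, and it is the counterpart of the small representation-theoretic input on finite alternating groups in the proof of Theorem~\ref{T-McDuff}. Let $p_n=1-e_{k_nk_n}$ and $q_n=1-e_{k_n-1,k_n-1}$, two commuting projections with $\tr(p_n)=\tr(q_n)=1-1/k_n$ and $\tr(p_nq_n)=1-2/k_n$, and put
\[
v_n=\sum_{i=1}^{k_n-2}e_{ii}+e_{k_n,k_n-1},
\]
a partial isometry in $M$ with $v_n^*v_n=p_n$, $v_nv_n^*=q_n$, $\tr(v_n)=1-2/k_n$, and $\|v_n-1\|_2=\sqrt{3/k_n}$. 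Finally let $\tilde v_n=v_n^{\otimes F_n}\in M^{\overline{\otimes} X}\subseteq N$ be the elementary tensor equal to $v_n$ in every coordinate in $F_n$ and to $1$ elsewhere, so that $\tilde v_n^*\tilde v_n=p_n^{\otimes F_n}=:\tilde p_n$ and $\tilde v_n\tilde v_n^*=q_n^{\otimes F_n}=:\tilde q_n$.

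Two estimates then finish the proof. First, $\tilde p_n$ and $\tilde q_n$ commute, so
\[
\|\tilde v_n\tilde v_n^*-\tilde v_n^*\tilde v_n\|_2^2=\tr(q_n)^{|F_n|}+\tr(p_n)^{|F_n|}-2\,\tr(p_nq_n)^{|F_n|}=2\bigl(1-\tfrac{1}{k_n}\bigr)^{|F_n|}-2\bigl(1-\tfrac{2}{k_n}\bigr)^{|F_n|},
\]
which converges to $2\cdot\tfrac12-2\cdot\tfrac14=\tfrac12$ because $|F_n|/k_n\to\ln 2$; hence $w$ does not commute with $w^*$. Second, to see that $(\tilde v_n)_n$ is central it suffices to test against each $M_x$ and each $u_g$, since these generate $N$ (a commutator-of-products argument together with $\|\cdot\|_2$-density, using $\|\tilde v_n\|_\infty=1$, then handles arbitrary $y$). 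For $a\in M_x$ with $\|a\|\le 1$ the commutator $[\tilde v_n,a]$ vanishes when $x\notin F_n$, and when $x\in F_n$ it equals $v_n^{\otimes(F_n\setminus\{x\})}\otimes[v_n,a]_x$, so $\|[\tilde v_n,a]\|_2=\tr(p_n)^{(|F_n|-1)/2}\|[v_n,a]\|_2\le 2\|v_n-1\|_2=2\sqrt{3/k_n}\to 0$. For $u_g$, the shift carries the $x$-coordinate to the $gx$-coordinate, so $u_g\tilde v_nu_g^*=v_n^{\otimes gF_n}$, and a coordinatewise expansion gives
\[
\|[u_g,\tilde v_n]\|_2^2=\|v_n^{\otimes gF_n}-v_n^{\otimes F_n}\|_2^2=2\,\tr(p_n)^{|F_n|}\Bigl(1-\bigl(|\tr(v_n)|^2/\tr(p_n)\bigr)^{|gF_n\setminus F_n|}\Bigr).
\]
Here $|\tr(v_n)|^2/\tr(p_n)=(1-2/k_n)^2/(1-1/k_n)=1-3/k_n+O(k_n^{-2})$, so the bracket equals $1-\exp\bigl(-(3+o(1))\,|gF_n\setminus F_n|/k_n\bigr)$; since $k_n\asymp|F_n|$ and $|gF_n\setminus F_n|/|F_n|\to 0$ by the F{\o}lner property, this tends to $0$, and $\|[u_g,\tilde v_n]\|_2\to 0$. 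Thus $N'\cap N^\omega$ is noncommutative and $N$ is McDuff.

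The one genuinely delicate point is the design of $v_n$, which must do a three-way balancing act. On the one hand the last display shows that $u_g$-centrality of $\tilde v_n$ forces $\tr(v_n)$ to lie within $O(1/|F_n|)=O(1/k_n)$ of $1$, because the F{\o}lner boundary $|gF_n\setminus F_n|$ grows slower than $|F_n|\asymp k_n$; on the other hand, pushing $v_n$ all the way to a scalar multiple of a projection would make $v_nv_n^*=v_n^*v_n$ and destroy the commutator in the first estimate. The escape is that $\tr(v_n)$ being $1-O(1/k_n)$ is perfectly compatible with $v_nv_n^*$ and $v_n^*v_n$ being projections of trace $1-1/k_n$ that genuinely differ --- here by $e_{k_nk_n}-e_{k_n-1,k_n-1}$, of $2$-norm $\sqrt{2/k_n}$ --- and raising $\tr(p_n)=1-1/k_n$ and $\tr(p_nq_n)=1-2/k_n$ to the \emph{full} power $|F_n|\asymp k_n\ln 2$ then pries them apart by a universal amount. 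In short, the centrality defects are governed by exponents $o(|F_n|)$ and die, while the noncommutation is governed by the exponent $|F_n|$ itself and survives. That such a $v_n$ exists in $M$ is automatic once $M$ is a II$_1$ factor; in Theorem~\ref{T-McDuff} producing the analogous partial isometry inside $\fA(\Gamma,X)$ is exactly what requires the modest dose of finite-alternating-group representation theory.
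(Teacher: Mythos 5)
Your proof is correct, and it runs on the same engine as the paper's: tensor a partial isometry $v$ with $v^*v=p$, $vv^*=q$, $vpq=pq$ and trace very close to $1$ over a F{\o}lner set calibrated so that $\tr(p)^{|F_n|}\to\frac12$ and $\tr(pq)^{|F_n|}\to\frac14$, so that the noncommutation defect $\tilde q-\tilde p$ survives with squared norm $\to\frac12$ while the $u_g$-commutators are killed by the F{\o}lner boundary exponent $|gF_n\setminus F_n|=o(|F_n|)$. The differences are in the seed and in the bookkeeping. The paper exploits the diffuseness of $M$ to produce, inside a masa, \emph{exactly} independent commuting projections of trace $2^{-|S|^{-1}}$, which makes the traces of the tensor powers exactly $\frac12$ and $\frac14$ and reduces all estimates to a single inequality on $\tr(pq)^{\delta|T|}$; you instead take codimension-one projections in a unital copy of the $k_n\times k_n$ matrices with $k_n\approx|F_n|/\ln 2$ and track the asymptotics $(1-1/k_n)^{|F_n|}\to\frac12$, which is essentially the Theorem~\ref{T-McDuff} mechanism transplanted (the paper's closing remark in Section~\ref{S-proof} notes it deliberately switched to the diffuse seed here to simplify exactly these computations). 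The second difference: the paper makes $\tilde v$ commute \emph{exactly} with the finitely supported tensors in $\Omega_1$ by choosing the F{\o}lner set disjoint from their supports (possible since the absence of finite orbits forces $|T|\to\infty$), whereas you estimate the single-site commutators by $2\tr(p_n)^{(|F_n|-1)/2}\|v_n-1\|_2=O(k_n^{-1/2})$ and propagate via the Leibniz rule; your version costs a small extra computation but needs no disjointness arrangement. Both routes are complete; your closing discussion of why $\tr(v_n)=1-O(1/k_n)$ is compatible with $p_n\neq q_n$ is precisely the balancing act the paper's construction is built around.
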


In the special case when the action is that of an amenable group on itself by left translation,
the conclusion of the theorem follows from a general result of Bédos on actions
of amenable groups on McDuff II$_1$ factors \cite{Bed90}. 

When $G$ is non-inner-amenable, the amenability of the action $G\curvearrowright X$
is actually equivalent to both the McDuff property and property Gamma 
for $M^{\overline{\otimes} X} \rtimes G$, with the implication from property Gamma to amenability
following from \cite{Eff75} and Lemma~2.7 of \cite{VaeVer15}, 
as observed in Proposition~2.8 of \cite{Pat23} (the non-inner-amenability assumption
cannot be dropped here, as we illustrate in Example~\ref{free}).
Moreover, Patchell has recently shown, using deformation/rigidity techniques, that if 
$G$ is non-inner-amenable and ICC, the action
$G\curvearrowright X$ is transitive and nonamenable, and the stabilizer of
some nonempty finite subset of $X$ is amenable (a kind of mixing condition)
then $M^{\overline{\otimes} X} \rtimes G$ is prime \cite{Pat23}. 

As a special case of Theorem~\ref{T-McDuff shift}, we obtain the following result 
for II$_1$ factors arising as group von Neumann algebras of generalized wreath products
that conform, as crossed products, to the framework of the theorem statement. 
Recall that the generalized (restricted) wreath product 
$H\wr_X G$ of two groups relative to an action $G\curvearrowright X$ on a set is defined as the semidirect product
$H^{\oplus X} \rtimes G$ where $G$ acts on the restricted direct sum 
$H^{\oplus X}$ by $g\cdot (h_x )_{x\in X} = (h_{g^{-1}x} )_{x\in X}$. In this case there is a natural
isomorphism $\sL (H\wr_X G) \cong \sL (H)^{\overline{\otimes} X} \rtimes G$, and under this identification
we get the two factoriality conditions in Theorem~\ref{T-McDuff shift} precisely when both $H$ and $H\wr_X G$ are ICC.

\begin{theoremi}\label{T-McDuff wreath}
Let $H$ be a nontrivial countable ICC group and $G\curvearrowright X$ be an amenable action of a 
countable group on a countable set with no finite orbits 
such that the generalized wreath product $H\wr_X G$ is ICC (which will be the case, for example, if $G$ is ICC). 
Then $\sL (H\wr_X G)$ is a McDuff II$_1$ factor.
\end{theoremi}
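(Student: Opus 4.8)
The plan is to deduce Theorem~\ref{T-McDuff wreath} directly from Theorem~\ref{T-McDuff shift} by setting $M = \sL (H)$. First I would invoke the canonical identification $\sL (H\wr_X G) \cong \sL (H^{\oplus X}) \rtimes G$ coming from the semidirect product decomposition $H\wr_X G = H^{\oplus X}\rtimes G$, together with the canonical identification of $\sL (H^{\oplus X})$ with the tracial von Neumann algebraic tensor product $\sL (H)^{\overline{\otimes} X}$, under which the translation action $g\cdot (h_x)_{x\in X} = (h_{g^{-1}x})_{x\in X}$ of $G$ on $H^{\oplus X}$ becomes precisely the shift action $G\curvearrowright M^{\overline{\otimes} X}$ of Theorem~\ref{T-McDuff shift}. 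This yields a trace-preserving isomorphism $\sL (H\wr_X G)\cong M^{\overline{\otimes} X}\rtimes G$.

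Next I would check the two factoriality hypotheses of Theorem~\ref{T-McDuff shift}. Since $H$ is a nontrivial ICC group, the Murray--von Neumann criterion makes $M = \sL (H)$ a II$_1$ factor, and since $H\wr_X G$ is assumed ICC the same criterion makes $\sL (H\wr_X G)\cong M^{\overline{\otimes} X}\rtimes G$ a II$_1$ factor. The remaining hypotheses of Theorem~\ref{T-McDuff shift}, namely that $G\curvearrowright X$ is an amenable action of a countable group on a countable set with no finite orbits, are among the hypotheses of Theorem~\ref{T-McDuff wreath}. Theorem~\ref{T-McDuff shift} therefore applies and shows that $M^{\overline{\otimes} X}\rtimes G$, hence $\sL (H\wr_X G)$, is McDuff; being a II$_1$ factor, it is a McDuff II$_1$ factor, as claimed.

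It remains to justify the parenthetical assertion that $H\wr_X G$ is ICC whenever $G$ is. Given a nontrivial element $(h,g)\in H^{\oplus X}\rtimes G$, if $g\neq e$ then conjugating by $(e,t)$ with $t\in G$ gives the element $(t\cdot h,tgt^{-1})$, and the infinitely many distinct values of $tgt^{-1}$, furnished by the ICC property of $G$, produce infinitely many conjugates; if $g = e$ then conjugating $(h,e)$ by $(e,t)$ gives $(t\cdot h,e)$, whose support is the translate $t\cdot\supp (h)$ of the finite nonempty set $\supp (h)$, and since the action has no finite orbits these translates take infinitely many values, again yielding infinitely many conjugates.

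I do not expect any genuine obstacle here, as the result is a specialization of Theorem~\ref{T-McDuff shift}; the only points requiring care are to make the isomorphism $\sL (H\wr_X G)\cong M^{\overline{\otimes} X}\rtimes G$ precise (interpreting $M^{\overline{\otimes} X}$ as the tracial infinite tensor product indexed by the possibly infinite countable set $X$ and matching the translation action with the shift) and to translate factoriality of the crossed product into the ICC condition on $H\wr_X G$ via Murray--von Neumann.
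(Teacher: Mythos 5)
Your proposal is correct and follows exactly the route the paper takes: Theorem~\ref{T-McDuff wreath} is deduced from Theorem~\ref{T-McDuff shift} with $M=\sL(H)$ via the canonical identification $\sL(H\wr_X G)\cong \sL(H)^{\overline{\otimes}X}\rtimes G$, with the two factoriality hypotheses corresponding precisely to $H$ and $H\wr_X G$ being ICC. Your verification of the parenthetical claim (that $G$ ICC forces $H\wr_X G$ ICC, using the absence of finite orbits for elements supported in $H^{\oplus X}$) is a correct filling-in of a detail the paper leaves to the reader.
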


As before, when $G$ is non-inner-amenable the amenability of the action $G\curvearrowright X$
is equivalent to both the McDuff property and property Gamma for $\sL (H\wr_X G)$.

It was shown in \cite{IoaPopVae13,BerVae14} that many generalized wreath products are W$^*$-superrigid, 
i.e., uniquely determined as groups by their group von Neumann algebra.
The base groups in \cite{IoaPopVae13,BerVae14} are Abelian, in contrast to the above
ICC hypothesis on $H$, which is there to guarantee the factoriality of $\sL H$ and hence the applicability
of Theorem~\ref{T-McDuff shift}. The recent papers \cite{ChiIoaOsiSun23a,ChiIoaOsiSun23b} however
treat wreath-like products that include ones with ICC bases.

In response to a question of Robin Tucker-Drob, we show that our technique can also be used
to establish the following result on JS-stability for generalized wreath products.
A countable discrete p.m.p.\ (probability-measure-preserving) equivalence 
relation is said to be {\it JS-stable}
if it satisfies the McDuff-like property of being isomorphic to its product 
with the unique ergodic hyperfinite p.m.p.\ equivalence relation \cite{JonSch87}.
A countable group is {\it JS-stable} if it admits a free ergodic p.m.p.\ action 
whose orbit equivalence relation is JS-stable. We thank Robin Tucker-Drob
for a suggestion that permitted us to remove the non-Abelianness assumption on $H$
in our original version of the theorem.

\begin{theoremi}\label{T-stable}
Let $H$ be a nontrivial countable group and $G\curvearrowright X$ an amenable action of a countable
group on a countable set with no finite orbits. Then the generalized wreath product $H\wr_X G$ is JS-stable.
\end{theoremi}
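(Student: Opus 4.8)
The plan is to exhibit a concrete free ergodic p.m.p.\ action of $\Gamma:=H\wr_X G$ whose orbit equivalence relation is stable in the sense of \cite{JonSch87}, building the stabilizing data out of the F{\o}lner sets of the amenable action $G\curvearrowright X$ in just the way the almost central projections of Theorem~\ref{T-McDuff shift} were built out of F{\o}lner cores. Fix $s\in H\setminus\{e\}$ and put $C:=\langle s\rangle$. Let $H\curvearrowright(Y,\nu)$ be the Bernoulli action $H\curvearrowright[0,1]^{H}$, which is free, ergodic and mixing and on which $C$ acts freely; since $G\curvearrowright X$ has no finite orbits, $X$ and $G$ are infinite, and we fix a free ergodic p.m.p.\ action $G\curvearrowright(W,\omega)$. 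On $(\Omega,\mu):=(Y^{X}\times W,\ \nu^{\otimes X}\otimes\omega)$ let $\Gamma=H^{\oplus X}\rtimes G$ act by $(h_x)_x\cdot\big((y_x)_x,w\big)=\big((h_xy_x)_x,w\big)$ and $g\cdot\big((y_x)_x,w\big)=\big((y_{g^{-1}x})_x,gw\big)$. This action is free --- elements with nontrivial $G$-part move $W$ freely, and elements of $H^{\oplus X}$ move $Y^{X}$ because $H\curvearrowright Y$ is free --- and ergodic, since an invariant function is $H^{\oplus X}$-invariant, hence a function of $w\in W$ alone, hence constant. So it suffices to prove that $\mathcal{R}:=\mathcal{R}_{\Gamma\curvearrowright\Omega}$ is stable.

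Using amenability of $G\curvearrowright X$ together with the absence of finite orbits, I would choose finite F{\o}lner sets $F_n\subseteq X$ with $|F_n|\to\infty$ and then their F{\o}lner cores $E_n\subseteq F_n$ relative to exhausting finite subsets $S_n\uparrow G$, so that $|E_n|/|F_n|\to1$, $|E_n|\to\infty$ and $|gE_n\triangle E_n|=o(|E_n|)$ for every fixed $g\in G$. For each $n$ let $C^{(E_n)}\leq H^{\oplus X}$ be the subgroup of configurations supported on $E_n$ with entries in $C$; it acts freely on $\Omega$ and generates a subequivalence relation $\mathcal{S}_n\subseteq\mathcal{R}$ all of whose classes have size $|C|^{|E_n|}$, which is infinite when $s$ has infinite order and tends to $\infty$ otherwise. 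Since $gC^{(E_n)}g^{-1}=C^{(gE_n)}$, the F{\o}lner property makes each $g\in G$ normalize $\mathcal{S}_n$ up to vanishing error, while each $k\in H^{\oplus X}$ commutes with $C^{(E_n\setminus\supp k)}$ and so centralizes $\mathcal{S}_n$ up to vanishing error; one then argues that these approximate invariances pass from the generating set $H^{\oplus X}\cup G$ to all of $[\mathcal{R}]$. For nontriviality I run the F{\o}lner-core computation of Theorem~\ref{T-McDuff shift} verbatim: picking $P_n\subseteq Y$ with $\nu(P_n)^{|E_n|}=\tfrac12$ forces $\nu(P_n)=1-O(1/|E_n|)$, hence $\nu(P_n)^{o(|E_n|)}\to1$, so the sets $\tilde A_n:=\{((y_x)_x,w):y_x\in P_n\text{ for all }x\in E_n\}$ have measure $\tfrac12$ and satisfy $\mu(\gamma\tilde A_n\triangle\tilde A_n)\to0$ for every $\gamma\in[\mathcal{R}]$; carried by the $\mathcal{S}_n$, they certify that $(\mathcal{S}_n)$ is not asymptotically trivial. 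One then concludes that $(\mathcal{S}_n)$ is an asymptotically central sequence of subequivalence relations with unbounded orbits, so by \cite{JonSch87} $\mathcal{R}$ is isomorphic to its product with the ergodic hyperfinite p.m.p.\ equivalence relation; thus $\Gamma=H\wr_X G$ is JS-stable.

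The hard part will be the exact matching with the Jones--Schmidt machinery. The crude estimate on the uniform metric of $[\mathcal{R}]$ coming from $|gE_n\triangle E_n|=o(|E_n|)$ is by itself too weak to say that $g$ ``asymptotically normalizes'' $\mathcal{S}_n$ in the form their criterion demands of a sequence of subequivalence relations; one has to install the correct notion --- controlling the $\mathcal{S}_n$ together with the gradings $\{\tilde A_n,\tilde A_n^{c}\}$, i.e.\ with their transverse $L^{\infty}(\Omega)$-structure --- and then check that testing against the generating set $H^{\oplus X}\cup G$ genuinely suffices. Beyond this, when $H$ is non-abelian a single copy of $s$ placed at every coordinate of $E_n$ need not centralize the relevant part of $\mathcal{R}$, so $\mathcal{S}_n$ cannot be taken quite this naively; this is presumably where the suggestion of Tucker-Drob enters, replacing $C^{(E_n)}$ by the orbit relation of the action on $Y$ of a better-chosen (e.g.\ abelian) subgroup of $H$ whose normalizer inside $[\mathcal{R}_{H\curvearrowright Y}]$ is large enough to swallow the F{\o}lner boundary errors. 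The remaining ingredient --- the quantitative bookkeeping coupling $|E_n|/|F_n|\to1$ with $\nu(P_n)^{|E_n|}=\tfrac12$ --- is the precise analogue of the estimate underpinning Theorems~\ref{T-McDuff} and~\ref{T-McDuff shift} and should transfer with only cosmetic changes.
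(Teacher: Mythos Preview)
Your overall architecture --- using F{\o}lner sets of the amenable action to produce product sets of measure $1/2$ and companion ``rotations'' supported on those F{\o}lner sets --- is exactly the mechanism behind the paper's proof. But the proposal has two genuine gaps, both of which you yourself flag, and the paper resolves them not by strengthening the subequivalence-relation approach but by abandoning it in favour of the Kida/Jones--Schmidt stability-sequence criterion.

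First, the criterion. The paper does not argue via an ``asymptotically central sequence of subequivalence relations''; it uses the stability-sequence formulation: a sequence of pairs $(T_n,A_n)$ with $T_n$ a single full-group element and $A_n$ a measurable set, satisfying four explicit conditions. This is cleaner because it requires neither freeness nor ergodicity of the action --- your auxiliary factor $(W,\omega)$ is unnecessary --- and because asymptotic centrality of a single full-group element is much easier to quantify than ``approximate normalization'' of an entire sub-relation $\mathcal S_n$.

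Second, and more importantly, the construction of the full-group data. The paper does \emph{not} use the subgroup $C^{(E_n)}\le H^{\oplus X}$ at all. Instead it fixes $h\in H\setminus\{e\}$ and builds a measurable map $\omega:[0,1]^H\to\{e,h,h^{-1}\}$ on the Bernoulli base: with $U_0=[0,2^{-1/|E|}]$ and $Z=\pi_e^{-1}(U_0)\cap\pi_h^{-1}(U_0^{\,c})$, one sets $\omega=h$ on $Z$, $\omega=h^{-1}$ on $\gamma_h Z$ (which is disjoint from $Z$), and $\omega=e$ elsewhere, so that $y\mapsto\gamma_{\omega(y)}y$ is an involution of $[0,1]^H$ swapping $Z$ with $\gamma_h Z$. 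The full-group element is then defined coordinatewise by $T(y)(x)=\omega(y_x)$ for $x\in E$ and $T(y)(x)=e$ otherwise. This completely dissolves your non-abelian worry: nothing here uses commutativity in $H$, because the value $T(y)\in H^{\oplus X}$ is dictated by the configuration $y$ rather than obtained by multiplying fixed group elements. The approximate-centrality conditions then reduce to the observation that on the large set where $\omega(y_x)=e$ for every $x$ in the F{\o}lner boundary, the required equalities hold exactly --- precisely the bookkeeping you anticipated. The set $A_n$ is your $\tilde A_n$ (with $P_n=\pi_e^{-1}(U_0)$), and the computation $\nu(T^0 A\,\triangle\,A)=1/2$ is immediate from the swap structure of $\omega$.

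So the missing idea is: replace the subgroup $C^{(E_n)}$ by a single, measurably defined full-group element built from a swap cocycle on the base. With that in place, the rest of what you wrote goes through with only cosmetic changes.
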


The details of the proof of Theorem~\ref{T-McDuff}, along with a review of definitions and basic background material, 
are contained in Section~\ref{S-proof}. The proofs of Theorem~\ref{T-McDuff shift} and Theorem~\ref{T-stable}
are contained in Section~\ref{S-proof shift} and \ref{S-proof stable}.
\smallskip

\noindent{\it Acknowledgements.}
The authors were supported by the Deutsche Forschungsgemeinschaft
(DFG, German Research Foundation) under Germany's Excellence Strategy EXC 2044-390685587,
Mathematics M{\"u}nster: Dynamics--Geometry--Structure, and by the SFB 1442 of the DFG.
We are grateful to Robin Tucker-Drob for asking whether our technique could be applied
to the problem of JS-stability for generalized wreath products as well as for discussions on
this topic, to Gregory Patchell for helpful correspondence, and to Giles Gardam for implementing some 
enlightening numerical computations during the early stages of the project.

\section{Proof of Theorem~\ref{T-McDuff}}\label{S-proof}

Let $M$ be a II$_1$ factor and $\tr$ its faithful normal tracial state. Our factors are always
assumed to be separable
for the trace norm $\| a \|_2 = \tr (a^* a)^{1/2}$. A bounded sequence $(a_n )$ in $M$ is said to be
a {\it central sequence} if $\| [a_n , b] \|_2 \to 0$ for every $b\in M$.
The factor $M$ has the {\it McDuff property} if $M\overline{\otimes} R \cong M$ where $R$ is the hyperfinite II$_1$ factor. 
By a theorem of McDuff \cite{McD70}, $M$ has the McDuff property if and only if
there exist central sequences $(a_n )$ and $(b_n )$ in $M$ such that $\| [a_n , b_n ] \|_2 \not\to 0$.
It is this central sequence criterion that we will use to establish the McDuff property in Theorem~\ref{T-McDuff}. Later in Section~\ref{S-proof shift} we will also invoke {\it property Gamma},
which asks for the existence of a central sequence of unitaries with trace zero, a condition
that is readily seen to be weaker than the McDuff property (which itself can also be characterized
by the existence of a sequence of unital $2\times 2$ matrix subalgebras that is central
in the obvious sense).

Let $\Gamma$ be a countable discrete group and $\Gamma\curvearrowright X$ a continuous action
on the Cantor set. The {\it topological full group} $[[ \Gamma\curvearrowright X ]]$ of the action is the group
of all homeomorphisms $h$ from $X$ to itself for which there exist a clopen partition $\{ A_1 ,\dots , A_n \}$
of $X$ and $s_1 , \dots , s_n \in \Gamma$ such that $h(x) = s_i x$ for all $i=1,\dots , n$ and $x\in A_i$. 
This group is countable because $\Gamma$ is countable and $X$ admits only countably many clopen partitions.

Next we recall from \cite{Nek17} the definition of the alternating group $\fA (\Gamma ,X)$.
Let $d\in\Nb$ and write $\fS_d$ for the symmetric group on $\{ 1,\dots , d \}$. 
Consider the homomorphisms $\psi : \fS_d \to [[ \Gamma\curvearrowright X ]]$
for which there exist pairwise disjoint sets $A_1 , \dots A_d \subseteq X$  
such that the image of a permutation $\sigma$ under $\psi$ acts as the identity
on the complement of $A_1 \sqcup\cdots\sqcup A_d$ and for each $i=1,\dots ,d$ maps $A_i$ to $A_{\sigma (i)}$ 
via some element of $\Gamma$. The image of these homomorphisms generate a subgroup $\fS_d (\Gamma ,X)$ of
$[[ \Gamma\curvearrowright X ]]$, and we can also consider the subgroup $\fA_d (\Gamma ,X)$ of $\fS_d (\Gamma ,X)$
generated by the images of the restrictions of the homomorphisms to the alternating group $\fA_d \subseteq \fS_d$.
The group $\fA_3 (\Gamma ,X)$ is called the {\it alternating group} of the action $\Gamma\curvearrowright X$
and written $\fA (\Gamma ,X)$. When the actions has no finite orbits one has $\fA (\Gamma ,X) = \fA_3 (\Gamma ,X)$
for every $d\geq 2$.
It is shown in \cite{Nek17} that if the action of $\Gamma$ is minimal then $\fA_d (\Gamma ,X)$ is simple,
while if $\Gamma$ is finitely generated and the action is expansive 
(equivalently, is a subshift action with finitely many symbols) and has no orbits of cardinality less than $5$
then $\fA_d (\Gamma ,X)$ is finitely generated.

We invariably denote by $\tr$ the unique normal tracial state on a II$_1$ factor, 
with the particular algebra being understood from the context. The identity element of a group will always 
be written $e$.

By Proposition~5.1 of \cite{KerTuc23}, if $\Gamma\curvearrowright X$ is a topologically free continuous action 
of group on the Cantor set then every subgroup of $[[\Gamma\curvearrowright X ]]$ 
containing $\fA (\Gamma, X)$ is ICC. Theorem~\ref{T-McDuff} is thus a consequence of the following result. The amenability of the group $\Gamma$ will be applied in the form 
of the F{\o}lner property, which requires, 
for every finite set $e\in K\subseteq \Gamma$ and $\delta > 0$, that there 
exist a nonempty finite set $T\subseteq \Gamma$ such that $|\bigcap_{s\in K} s^{-1} T | \geq (1 - \delta )|T|$.

\begin{theorem}\label{T-McDuff stronger}
Let $\Gamma\curvearrowright X$ be an action of a countably infinite amenable group on the Cantor set with at least one free orbit.
Then the group von Neumann algebra of any ICC subgroup of $[[\Gamma\curvearrowright X ]]$ 
containing $\fA (\Gamma, X)$ is a McDuff II$_1$ factor.
\end{theorem}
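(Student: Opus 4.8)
The plan is to verify McDuff's central-sequence criterion: we exhibit bounded sequences $(v_n)$ and $(v_n^*)$ in $\sL G$ such that each is central (commutators with every element of $\sL G$ vanish in $\|\cdot\|_2$) while $\|[v_n, v_n^*]\|_2$ stays bounded away from zero. Following the strategy outlined in the introduction, I would for each $n$ do the following. First, fix a large $d$ (to be determined, growing with $n$), and choose a finite-dimensional $*$-subalgebra $B \cong M_{k_1}(\Cb) \oplus \cdots \oplus M_{k_r}(\Cb)$ that sits inside $\sL(\fA_{d})$ via the regular-representation block decomposition of a finite alternating group; by elementary representation theory of $\fA_d$ one can make $B$ as highly noncommutative as needed, and in particular one can find inside $B$ a projection $p$ of trace extremely close to $1$ together with a partial isometry $w \in B$ with $w^*w = p$ and $ww^*$ a projection commuting with $p$ and $(1-p)$-supported, so that $w$ witnesses genuine noncommutativity of $B$. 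This $B$ plays the role of a ``noncommutative Bernoulli base.''

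Next I would realize a permutational copy of this base over a Følner set. Using the Følner property of $\Gamma$, pick a nonempty finite $T \subseteq \Gamma$ that is $(K,\delta)$-invariant for a suitable finite $K \subseteq \Gamma$ depending on the elements of $G$ we want to test against and a tolerance $\delta = \delta_n \to 0$. The freeness of (at least) one orbit lets us, exactly as in \cite{KerTuc23}, embed a permutational wreath product $\fA_d \wr_T (\text{something}) \hookrightarrow \fA(\Gamma,X)$, whose group algebra contains $B^{\otimes T}$ with $\Gamma$ permuting the tensor legs. Then set $\tilde p$ to be the elementary tensor $p \otimes p \otimes \cdots \otimes p$ over the ``Følner core'' $T_0 = \bigcap_{s \in K} s^{-1}T$ (tensored with $1$ on the boundary), and likewise $\tilde v := w \otimes w \otimes \cdots \otimes w$ over $T_0$. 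Because $\tr(p)$ is chosen so close to $1$ that $\tr(p)^{|T_0|} \approx 1/2$, we get $\tr(\tilde p) \approx 1/2$; moreover $\tilde v^* \tilde v = \tilde p$, while $\tilde v \tilde v^*$ is a projection commuting with $\tilde p$ and, by the tensor-independence of the legs, approximately tracially independent from $\tilde p$ — hence $\tr(\tilde v \tilde v^* \tilde p) \approx \tr(\tilde v\tilde v^*)\tr(\tilde p)$ is appreciably smaller than $\tr(\tilde p)$, which forces $\|[\tilde v, \tilde v^*]\|_2^2 = 2\tr(\tilde p) - 2\tr(\tilde v\tilde v^* \tilde p)$ to be bounded below by a positive constant independent of $n$.

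It remains to check asymptotic centrality of $(\tilde v_n)$ (and hence of its adjoint). An element $g$ of the topological full group acts on $X$ by a locally constant rule, so for $n$ large enough (i.e. $K$ large enough) $g$ either commutes with the copy of $\fS_d\wr_T(\cdot)$ outright or shuffles its tensor legs by a permutation that moves only legs near the Følner boundary $T \setminus T_0$; since $\tilde v_n$ is supported on the core $T_0$ and $|T\setminus T_0| \le \delta_n |T|$, the difference $g\tilde v_n g^{-1} - \tilde v_n$ involves only $O(\delta_n|T_0|)$ tensor legs and one estimates $\|g\tilde v_n g^{-1} - \tilde v_n\|_2 \to 0$ by a standard telescoping/triangle-inequality argument over the affected legs (this is the same boundary-effect bookkeeping as in \cite{KerTuc23}, now carried out for the partial isometry $\tilde v$ rather than a projection). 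Since elements of $G$ span a $\|\cdot\|_2$-dense subspace of $\sL G$, centrality against all of $\sL G$ follows. The group being ICC (by Proposition~5.1 of \cite{KerTuc23}, or by hypothesis) gives that $\sL G$ is a II$_1$ factor, so McDuff's theorem applies and $\sL G$ is a McDuff II$_1$ factor.

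The main obstacle I anticipate is the simultaneous calibration in the representation-theoretic step: one needs $B$ of dimension large enough that a projection $p \in B$ with $\tr(p)$ that close to $1$ exists \emph{and} that still carries a partial isometry $w$ realizing the required noncommutativity with controlled traces, and one must then match $|T_0| \asymp 1/(1-\tr(p))$ against an actual Følner set of $\Gamma$ so that $\tr(p)^{|T_0|}$ lands near $1/2$ while $\delta_n$ is small enough for centrality — all while keeping the embedding of $\fA_d \wr_T(\cdot)$ inside $\fA(\Gamma,X)$ legitimate (which forces side conditions like $d \ge 5$ and enough room in the free orbit). Making these quantifiers line up, and checking that the tensor-independence estimate for $\tr(\tilde v\tilde v^*\tilde p)$ degrades gracefully under the boundary truncation, is the technical heart of the argument.
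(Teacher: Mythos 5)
Your proposal follows essentially the same route as the paper: a seed projection of trace $2^{-1/|S|}$ and a partial isometry in the group algebra of a finite alternating group $\fA(D)$ sitting over a free orbit, tensored over the core $S$ of a F{\o}lner set, with the commutator $[\tilde v,\tilde v^*]=\tilde q-\tilde p$ bounded below and centrality coming from the boundary effect. However, two of the steps, as you describe them, would not go through.

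The serious one is the centrality estimate. You propose to bound $\|u_h\tilde v u_h^{-1}-\tilde v\|_2$ by ``a standard telescoping/triangle-inequality argument over the affected legs.'' Quantitatively this fails: each affected leg contributes a term of size $\|v-1\|_2$, and since $\tr(p)=2^{-1/|S|}$ one computes $\|v-1\|_2^2=3(1-\tr(p))\asymp |S|^{-1}$, so with up to $\delta|T|$ affected legs the telescoped bound is of order $\delta|T|\cdot|T|^{-1/2}=\delta|T|^{1/2}$, which \emph{diverges} as the F{\o}lner sets grow. The additivity you need lives inside the trace, not in the triangle inequality: one must treat the affected legs as a single block $R$, factor out the common legs (operator norm $\le 1$), and compute $\|\tilde v_R-1\|_2^2\le 2\bigl(1-\Re\,\tr(v)^{|R|}\bigr)$ using multiplicativity of the trace over tensor legs; this gives roughly $2\bigl(1-(1-c|T|^{-1})^{\delta|T|}\bigr)\approx 2(1-e^{-c\delta})$, small for small $\delta$ uniformly in $|T|$. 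For this to work $\tr(v)$ must itself be real and close to $1$, which is why the paper imposes the extra normalization $vpq=pq$ on the partial isometry, forcing $\tr(v)=\tr(pq)$; an arbitrary partial isometry with $v^*v=p$ and $vv^*=q$ could have $\tr(v)=0$, in which case $\tilde v_S$ is nowhere near central.

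The second issue is the description of the seed data. You say $ww^*$ is ``commuting with $p$ and $(1-p)$-supported,'' which is incompatible with $\tr(p)\approx 1$ and with the independence you invoke immediately afterwards. In the working construction $q=vv^*$ and $p$ are commuting projections both of trace $\tau_0=2^{-1/|S|}$ with $\tr(pq)=2\tau_0-1$; they are \emph{not} independent (that would force $2\tau_0-1=\tau_0^2$, i.e.\ $\tau_0=1$). What saves the argument is the asymptotic fact that $(2\tau_0-1)^{|S|}\to 1/4$ while $\tau_0^{|S|}\to 1/2$, so independence emerges only after taking the tensor power, and this has to be verified by an explicit computation (the paper does it with a short l'H\^opital argument) rather than deduced from ``tensor-independence of the legs.'' You correctly identify the calibration of these quantities as the technical heart, but the mechanisms you name for both key estimates are not the ones that deliver the bounds.
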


\begin{proof}
Factoriality follows from the ICC condition.

Let $\Omega$ be a finite symmetric subset of $[[\Gamma\curvearrowright X]]$ and let $\eps>0$. By the definition of the topological full group, we can find a clopen partition $\mathcal{P}=\{ P_1 , \dots , P_N \}$ of $X$ such that for each $h\in\Omega$ there exist $s_{h,1} , \dots , s_{h,N} \in \Gamma$ for which $hx = s_{h,i} x$ for every $i=1,\dots , N$ and $x\in P_i$. Let $K$ be the collection of all of these $s_{h,i}$ together with the identity element of $\Gamma$.

Take a $\delta > 0$ such that
\begin{gather}\label{E-delta}
4^{-\delta /(1-\delta )} \geq 1 - \frac{\eps}{32} .
\end{gather}
By taking a logarithm and applying l'H\^{o}pital's rule, one can verify, for all $\theta\in\Rb$, that
\begin{gather}\label{E-log}
\lim_{r\to\infty} (2\cdot 2^{-r^{-1}} - 1)^{\theta r} = \frac{1}{4^\theta} .
\end{gather}
It follows that we can find an $r_0 > 0$ so that, for all $r\geq r_0$ and $\theta \in \{ 1 , \delta /(1-\delta ) \}$,
\begin{gather}\label{E-theta}
\Big|(2\cdot 2^{-r^{-1}} - 1)^{\theta r} - \frac{1}{4^\theta} \Big| \leq \frac{\eps}{32} .
\end{gather}

By hypothesis there exists an $x_0 \in X$ such that
the action of $\Gamma$ on the orbit $\Gamma x_0$ is free. 
By amenability, there exists a nonempty finite set $T\subseteq\Gamma$ such that the set 
$T'=\bigcap_{s\in K} s^{-1} T$ (which is a subset of $T$ since $e\in K$) satisfies 
$|T' | \geq \ceil{(1-\delta )|T|}$.
Since $\Gamma$ is infinite, we can choose $T$ so that its cardinality is larger that $r_0$, and also large 
enough so that
\begin{gather}\label{E-ceil}
(2^{-((1-\delta)|T|)^{-1}} )^{\ceil{(1-\delta)|T|}} \geq \frac12 - \frac{\eps}{8} .
\end{gather}
Since the action of $\Gamma$ 
on the orbit of $x_0$ is free and $\Gamma$ is infinite, we can find a sequence $(d_k)$ in $\Gamma$ such that the points $td_kx_0$ for $k\in\mathbb{N}$ and $t\in T$ are pairwise distinct. By the pigeonhole principle 
we can find a subsequence $(d_{k_j})$ such that for every $t\in T$ the points $td_{k_j}x_0$ for $j\in\mathbb{N}$ belong to a common member of $\mathcal{P}$. 
In particular, using continuity we can find a finite set $D\subset\Gamma$ of cardinality as large as we wish
(to be specified below) and a clopen neighbourhood $B$ of $x_0$ such that the sets $tdB$ for $d\in D$ and $t\in T$ are pairwise disjoint and for every $t\in T$ the sets $tdB$ for $d\in D$ 
are contained in a common member of $\mathcal{P}$. This choice of $D$ guarantees the existence of a function $\theta\in K^{\Omega\times T}$ defined by $htdx=\theta(h,t)tdx$ for $h\in\Omega, t\in T, d\in D$, and $x\in B$.

For each $s\in K$ we have $sT' \subseteq T$ and so for every $h\in\Omega$ we can find a $\sigma_h \in\Sym (T)$ such that $\sigma_ht = \theta (h,t)t$ for all $t\in T'$. 
Consider the alternating group $\fA (D)$. We regard the product $\fA (D)^{T}$ as a subgroup of $\fA(\Gamma, X)$ with an element $(\omega_t)_{t\in T}$ in $\fA (D)^{T}$ acting by $dtx\mapsto\omega_t(d)tx$ for all $x\in B$, $t\in T$, and $d\in D$ and by $x\mapsto x$ for all $x\in X\setminus TDB$.

By the representation theory of alternating groups \cite{FulHar91,JamKer81},
the Artin--Wedderburn decomposition of $\sL\fA(D)$ takes the form 
$\mathbb{C}\oplus\big(\bigoplus_{l\in L}\mathbb{M}_{k_l}\big)$ where $k_l\geq |D| -1$ for every $l$
in the finite set $L$. Write $\trs$ for the tracial state
on $\sL\fA(D)$ associated to the left regular representation of $\fA(D)$ on $\ell^2 (\fA(D))$,
i.e., the vector state $a\mapsto \langle a\delta_e , \delta _e \rangle$ where $\{ \delta_g : g\in\fA(D) \}$
is the canonical orthonormal basis for $\ell^2 (\fA(D))$. 
The summand $\Cb$ in the Artin--Wedderburn decomposition corresponds to the trivial representation 
of $\fA(D)$ and thus must act on $\ell^2 (\fA(D))$ as the orthogonal projection onto the
one-dimensional subspace of $\fA(D)$-invariant vectors, which is spanned by
the unit vector $\xi = |\fA(D)|^{-1/2} \sum_{g\in \fA(D)} \delta_g$. It follows that 
the projection $f := (1,0) \in \mathbb{C}\oplus\big(\bigoplus_{l\in L}\mathbb{M}_{k_l}\big)$ satisfies
\begin{gather}\label{E-trivial}
\trs (f) = \langle f\delta_e , \delta_e \rangle
= \langle \langle \delta_e , \xi \rangle \xi , \delta_e \rangle
= \frac{1}{|\fA(D)|} .
\end{gather}
Consequently there exist $\lambda_l > 0$ for $l\in L$ with $|\fA(D)|^{-1} + \sum_{l\in L} \lambda_l = 1$ 
(in fact $\lambda_l = k_l^2 / (1+\sum_{l\in L} k_l^2 )$ by standard theory) 
such that for all $a = (b , (c_l )_{l\in L} ) \in \mathbb{C}\oplus\big(\bigoplus_{l\in L}\mathbb{M}_{k_l}\big)$
we have, denoting by $\trs_l$ the unique tracial state on $M_{k_l}$,
\begin{gather}\label{E-convex}
\trs (a) = \frac{b}{|\fA(D)|} + \sum_{l\in L} \lambda_l \trs_l (c_l ) .
\end{gather}

For $l\in L$ write $\{ e_{i,j}^{(l)} \}_{1\leq i,j\leq k_l}$ for the matrix units of the summand $M_{k_l}$.
Set $d_l =\floor{2^{-((1-\delta)|T|)^{-1}}k_l}$ and define
\begin{align*}
    p&=\sum_{l\in L}\sum_{i=1}^{d_l}e_{i,i}^{(l)},\\
    q&=\sum_{l\in L}\Bigg(\sum_{i=1}^{2d_l-k_l}e_{i,i}^{(l)}+\sum_{i=d_l+1}^{k_l}e_{i,i}^{(l)}\Bigg),\\
    v&=\sum_{l\in L}\Bigg(\sum_{i=1}^{2d_l-k_l}e_{i,i}^{(l)}+\sum_{i=d_l+1}^{k_l}e_{i,i-k_l+d_l}^{(l)}\Bigg).
\end{align*}
We have $v^*v=p$, $vv^*=q$, and $vpq=pq$. Moreover, writing $e_l = \sum_{i=1}^{d_l} e_{i,i}$ we have, 
by (\ref{E-convex}),
\begin{align*}
\trs (p) = \trs (q)
= \sum_{l\in L} \trs (e_l ) 
= \sum_{l\in L} \lambda_l \frac{d_l}{k_l} .
\end{align*}
It follows, by virtue of the equation $\sum_{l\in L} \lambda_l = 1 - |\fA(D)|^{-1}$, 
our choice of $d_l$, and the fact that $k_l \geq |D|-1$ for every $l\in L$,
that we can make the quantity $\trs (p)$ as close to $2^{-((1-\delta)|T|)^{-1}}$ as we wish by taking $|D|$ sufficiently large.
Thus given an $\eps' > 0$ we can take $D$ to have large enough cardinality so that
\begin{gather*}
\trs (p) \geq 2^{-((1-\delta)|T|)^{-1}} - \eps' .
\end{gather*}
Since $\trs (pq) = \sum_{l\in L} \lambda_l (2d_l - k_l ) /k_l$,
we may similarly assume that $|D|$ is large enough so that
\begin{gather*}
2\cdot 2^{-((1-\delta)|T|)^{-1}} -1 - \eps' \leq \trs (pq) \leq 2\cdot 2^{-((1-\delta)|T|)^{-1}} -1 .
\end{gather*}
Therefore by taking $\eps'$ small enough we can guarantee, in view of (\ref{E-ceil}), that
\begin{gather}\label{E-one}
\trs (p)^{\ceil{(1-\delta)|T|}} \geq (2^{-((1-\delta)|T|)^{-1}} - \eps' )^{\ceil{(1-\delta)|T|}} \geq \frac12 - \frac{\eps}{4} 
\end{gather}
and, in view of (\ref{E-theta}), that
\begin{gather}\label{E-two}
\trs (pq)^{\delta |T|} \geq (2\cdot 2^{-((1-\delta)|T|)^{-1}} - 1- \eps' )^{\delta |T|} \geq 1 - \frac{\eps}{8} .
\end{gather}
Note also by (\ref{E-theta}) that 
\begin{gather}\label{E-three}
\trs (pq)^{(1-\delta)|T|} \leq (2\cdot 2^{-((1-\delta)|T|)^{-1}} - 1)^{(1-\delta )|T|} \leq \frac14 + \frac{\eps}{4} .
\end{gather}

For every $a\in\sL\fA(D)$ and $R\subseteq T$ write $\tilde{a}_R$ for the element 
$\otimes_{t\in T}a_t\in\sL\fA(D)^{\otimes T}=\sL (\fA (D)^{T} )\subseteq\sL\fA(\Gamma,X)$ where $a_t=a$ if $t\in R$ and $a_t=1$ otherwise. Note that the restriction of the trace $\tr$ on $\sL\fA(\Gamma,X)$ to $\sL (\fA (D)^{T} )$,
under the identification of the latter with $\sL\fA(D)^{\otimes T}$, is equal to the tensor product trace $\trs^{\otimes T}$.
Note also that for $h\in\Omega$, $a\in\sL\fA(D)$, and $S\subseteq T'$ we have $u_h\tilde{a}_Su_h^{-1}=\tilde{a}_{\sigma_hS}$. 
Since $T'$ has cardinality at least $\ceil{(1-\delta)|T|}$, we can choose a $S\subseteq T'$ with exactly
this cardinality. 
It follows using (\ref{E-one}) and (\ref{E-three}) that
\begin{align*}
    \|[\tilde{v}_{S},\tilde{v}^*_{S}]\|_2^2&=\|\tilde{p}_{S}-\tilde{q}_{S}\|_2^2\\
    &=\tr(\tilde{p}_{S}+\tilde{q}_{S}-2\widetilde{pq}_{S})\\
    &=2\big(\trs (p)^{|S|}-\trs (pq)^{|S|}\big)\\
    &\geq\frac{1}{2}-\eps.
\end{align*}
Furthermore, for all $h\in\Omega$ we have $|\sigma_hS\setminus S| \leq \delta |T|$
and hence, using (\ref{E-two}),
\begin{align}\label{E-commutator}
    \| u_h\tilde{v}_{S}u_h^{-1}-\tilde{v}_{S}\|_2^2&=\|\tilde{v}_{\sigma_hS}-\tilde{v}_{S} \|_2^2\\
    &=\|\tilde{v}_{\sigma_hS\cap S} (\tilde{v}_{\sigma_hS\setminus S}-\tilde{v}_{S\setminus\sigma_hS}) \|_2^2 \notag\\
    &\leq \| \tilde{v}_{\sigma_hS\cap S} \|^2\|\tilde{v}_{\sigma_hS\setminus S}-\tilde{v}_{S\setminus\sigma_hS} \|_2^2 \notag\\
    &\leq (\|\tilde{v}_{\sigma_hS\setminus S}-1 \|_2 + \| 1- \tilde{v}_{S\setminus\sigma_hS} \|_2 )^2 \notag\\
    &= 4\|\tilde{v}_{\sigma_hS\setminus S}-1 \|_2^2 \notag\\
    &= 4\tr (\tilde{v}_{\sigma_hS\setminus S}^* \tilde{v}_{\sigma_hS\setminus S} -
    \tilde{v}_{\sigma_hS\setminus S}^*- \tilde{v}_{\sigma_hS\setminus S} +1) \notag\\
    &\leq 8 \tr (1-\tilde{v}_{\sigma_hS\setminus S} ) \notag\\
    &= 8(1-\trs (pq)^{|\sigma_h S \setminus S|} ) \notag\\
    &\leq 8(1-\trs (pq)^{\delta |T|} ) \notag\\
    &\leq\eps .\notag
\end{align}
Thus if we take such $\tilde{v}_{S}$ and $\tilde{v}^*_{S}$ over an increasing sequence of sets $\Omega$ 
with union $[[\Gamma\curvearrowright X]]$ and a sequence of tolerances $\eps$ converging to zero, we 
obtain noncommuting approximately central sequences for $\sL[[\Gamma\curvearrowright X]]$ 
inside of $\sL\fA(\Gamma,X)$. This yields the McDuff conclusion in the theorem.
\end{proof}

In the above proof we could have avoided the application of l'Hospital's rule
by alternatively taking the seed projections $p$ and $q$ to be approximately independent with respect to the trace.
In fact this is how we will proceed in the proof of Theorem~\ref{T-McDuff shift},
where the whole picture simplifies due to the diffuseness of the seed space.

\section{Proof of Theorem~\ref{T-McDuff shift}}\label{S-proof shift}

It is a standard fact (provable in the same way as for groups acting on themselves by translation) that amenability for a group action $G\curvearrowright X$ on a set
is equivalent to the following F{\o}lner property: 
for every finite set $e\in K\subseteq G$ and $\delta > 0$ there
exists a nonempty finite set $T\subseteq X$ such that $|\bigcap_{s\in K} s^{-1} T | \geq (1 - \delta )|T|$,
in which case we say that $T$ is {\it $(K,\delta )$-invariant}. Such sets $T$ are informally referred to as
{\it F{\o}lner sets} with the understanding that a certain degree of approximate invariance is at play.
It can also be shown, in the say way as for groups themselves with respect to the left regular representation,  
that amenability for an action $G\curvearrowright X$ is equivalent to the existence
of approximately invariant unit vectors for the induced unitary representation $\pi$ on $\ell^2 (X)$,
i.e., to the existence, for every finite set $F\subseteq G$ and $\delta > 0$, of a unit vector $\xi\in\ell^2 (X)$
satisfying $\| \pi (g) \xi - \xi \| < \delta$ for every $g\in F$.
If $G$ itself is amenable (i.e., the action of $G$ on itself by left translation is amenable) 
then all of its actions are amenable, as is easy to verify.
See Section~4.1 of \cite{KerLi16} for more information.

\begin{proof}[Proof of Theorem \ref{T-McDuff shift}]
We wish to show, given a finite subset $\Omega$ of $M^{\overline{\otimes}X}\rtimes G$ and an $\eps>0$, 
that there exists a pair of elements in $M^{\overline{\otimes}X}\rtimes G$ whose commutators with elements in $\Omega$
have trace norm less than $\eps$ and whose commutator with each other has trace norm 
bounded away from zero independently of $\eps$. 
It evidently suffices to check this for $\Omega$ drawn from a subset of the crossed product which generates 
a trace-norm dense subalgebra. We may thus assume that
$\Omega=\Omega_1\cup\Omega_2$ where $\Omega_1$ consists of elementary tensors in $M^{\overline{\otimes}X}$ 
of finite support and $\Omega_2$ is the set $\{ u_g \}_{g\in F}$ of canonical unitaries corresponding to elements in a 
given finite subset $F$ of $G$ containing $e$. Write $Y$ for the union of the supports of elements in $\Omega_1$.

Choose a $\delta>0$ small enough so that $(1-2^{-2\delta /(1-\delta )}) \leq \eps /8$.
Since by assumption the action on $X$ has no finite orbits, 
the cardinality of the F{\o}lner sets for the action will tend to infinity as we demand more and more invariance.
We can thus find an $(F,\delta)$-invariant subset $T$ of $X$ that is disjoint from $Y$ by
first shrinking the tolerance $\delta$
a little and then finding a F{\o}lner set for this tightened tolerance that has sufficiently large cardinality 
so that its intersection with the complement of $Y$ will do the job.
Set $S=\bigcap_{s\in F}s^{-1}T$, which by $(F,\delta)$-invariance satisfies 
$|S| \geq (1-\delta )|T|$. Since $M$ is a II$_1$ factor it contains commuting projections $p$ and $q$ of trace 
$2^{-|S|^{-1}}$ which are independent, i.e., $\tr (pq) = \tr (p)\tr (q)$ 
(for example, choose a masa in $M$, write it in the form $A\overline{\otimes} A$ 
in such a way that the trace $\tr$ on $M$ restricts to
$\tr |_{A\otimes 1} \otimes \tr |_{1\otimes A}$ under the canonical identification of the two copies of $A$ 
with $A\otimes 1$ and $1\otimes A$, and take $f\otimes 1$ and $1\otimes f$ for a suitable projection $f$).
Since the projections $p-pq$ and $q-pq$ have the same trace they are Murray--von Neumann equivalent,
and so we can construct a partial isometry $v\in M$ such that $v^*v=p$, $vv^*=q$, and $vpq=pq$. 
Note that 
\begin{align}\label{E-pitrace}
	\tr (v) = \tr (qvp) = \tr (vpq) = \tr (pq) .
\end{align}

For $a\in M$ and $R\subseteq T$ write $\tilde{a}_R$ for the element $\otimes_{t\in T}a_t \in M^{\overline{\otimes}X}$ 
where $a_t=a$ if $t\in R$ and $a_t=1$ otherwise.
Then $\tr (\tilde{p}_S ) = \tr (\tilde{q}_S ) = \tr (p)^{|S|} = 1/2$ and $\tr (\widetilde{pq}_S ) = \tr (p)^{2|S|} = 1/4$.
By our choice of $T$, both $\tilde{v}$ and $\tilde{v}^*$ commute with the elements in $\Omega_1$. 
Moreover
\[
	\|[\tilde{v}_{S},\tilde{v}^*_{S}]\|_2^2
	=\|\tilde{p}_{S}-\tilde{q}_{S}\|_2^2
	=\tr(\tilde{p}_{S} ) + \tr (\tilde{q}_{S} )-2\tr (\widetilde{pq}_{S})
	= \frac12 ,
\]
and, using (\ref{E-pitrace}) and estimating as in (\ref{E-commutator}) in the proof of Theorem \ref{T-McDuff stronger},
we have, for every $g\in F$,
\begin{align*}
	\| u_g^{-1}\tilde{v}_{S}u_g-\tilde{v}_{S}\|_2^2&=\|\tilde{v}_{gS}-\tilde{v}_{S} \|_2^2\\
	&\leq 4\|\tilde{v}_{gS\setminus S}-1 \|_2^2 \\
	&= 4\tr (\tilde{v}_{gS\setminus S}^* \tilde{v}_{gS\setminus S} -
	\tilde{v}_{gS\setminus S}^*- \tilde{v}_{gS\setminus S} +1) \\
	&\leq 8 \tr (1-\tilde{v}_{gS\setminus S} ) \\
	&= 8(1-\tr (pq)^{|gS \setminus S|} ) \\
	&\leq 8(1-(2^{-2|S|^{-1}} )^{\delta |T|} ) \\
	&\leq 8(1-2^{-2\delta /(1-\delta )}) \\
	&\leq\eps .
\end{align*}
Taking such $\tilde{v}_S$ and $\tilde{v}_S^*$
over an increasing sequence of finite sets $\Omega$ 
with trace-norm dense union in $M^{\overline{\otimes}X}\rtimes G$ and a sequence of tolerances $\eps$ converging to zero,
we obtain central sequences witnessing the McDuff property.
\end{proof}

As mentioned in the introduction, it follows by \cite{Eff75} and Lemma~2.7 of \cite{VaeVer15}
that if $G$ in the context of 
Theorem~\ref{T-McDuff shift} is assumed to be non-inner-amenable then 
the action $G\curvearrowright X$ is amenable whenever $M^{\overline{\otimes} X} \rtimes G$
has property Gamma, and so in this case amenability
of the action $G\curvearrowright X$ is equivalent to both the McDuff property and property Gamma 
for $M^{\overline{\otimes} X} \rtimes G$. The non-inner-amenability cannot be dropped here,
as the following example illustrates.

\begin{example}\label{free}
Let $G:=F_2\times F_2^{\oplus\Nb}$ act on the set $X:=F_2\times\Nb$ by $(s,(t_k)_{k\in\Nb} )\cdot (r,n)=(srt_n^{-1},n)$. 
This action is not amenable, for if there were a finitely additive $G$-invariant probability measure 
on $X$ then it would give nonzero measure to $F_2\times \{ n \}$ for some $n$, and since the 
action of the subgroup $F_2 \times \{ e \} \subseteq G$ on $F_2\times \{ n \}$ simply replicates the action of $F_2$
on itself by left translation this would yield a contradiction to the nonamenability of $F_2$.

Now let $M$ be any II$_1$ factor, let $\Omega_1\subseteq M^{\overline{\otimes} X}$ be a finite set of elementary tensors 
with finite support, and let $F$ be a finite subset of $G$. Write $\Omega_2$ for the set $\{ u_g \}_{g\in F}$ of unitaries 
in the crossed product $M^{\overline{\otimes} X}\rtimes G$ corresponding to $F$.
For an element $(s,(t_k)_{k\in\Nb} )$ in $G$ we will call the (finite) set of indices $k$ in $\Nb$ for which $t_k\neq e$ 
its \emph{support}. Furthermore, for $s\in F_2$ and $n\in\Nb$ we denote by $g_{s,n}$ 
the element $(e,(t_k)_{k\in\Nb})$ supported on $\{n\}$ with $t_n=s$. Write $K_1$ for the 
set of all $n\in\Nb$ such that $F_2\times \{ n \}$ intersects the support of some element of 
$\Omega_1$ and write $K_2$ for the union of the supports of the elements in $F$. 
Then $K:= K_1 \cup K_2$ is a finite subset of $\Nb$. Write $a,b$ for the generators of $F_2$. 
Then for every $n\in\Nb\setminus K$ and $s\in\{a,b\}$ we have $gg_{s,n}g^{-1}=g_{s,n}$ for all $g\in F$ 
and $u_{g_{s,n}}yu_{g_{s,n}}^* = y$ for all $y\in\Omega_1$. Furthermore, 
$\| [u_{g_{a,n}},u_{g_{b,n}}] \|_2=\sqrt{2}$, and so one can construct a noncommuting sequence of such pairs 
of unitaries which are asymptotically central, showing that the II$_1$ factor
$M^{\overline{\otimes} X}\rtimes G$ is McDuff.
\end{example}

\section{Generalized wreath products and JS-stability}\label{S-proof stable}

Recall that the full group of a p.m.p.\ action $G\curvearrowright (Y,\nu )$ is the set of all
measurable maps $T:Y\rightarrow G$ with the property that the transformation 
$T^0$ of $Y$ given by $T^0(y):= T(y)y$ is a measure automorphism.
By Kida's general version of a criterion due to Jones and Schmidt in the free ergodic case \cite{JonSch87,Kid15},
to verify that a countable group $G$ is JS-stable it suffices to show that it admits 
a p.m.p.\ action $G\curvearrowright (Y,\nu )$ possessing a {\it stability sequence}, i.e.,
a sequence of pairs $(T_n,A_n)$ where $T_n$ is a member of the full group and $A_n$ is
a measurable subset of $X$ such that
\begin{enumerate}
	\item $\nu (\{ y\in Y : T_n (gy) =gT_n(y)g^{-1} \} ) \to 1$ for every $g\in G$,
	\item $\nu (T_n^0(B)\Delta B ) \to 0$ for every measurable $B\subseteq Y$,
	\item $\nu (gA_n\Delta A_n)\to 0$ for every $g\in G$,
	\item $\nu (T_n^0(A_n)\Delta A_n ) \geq \frac12$ for all $n\in\Nb$.
\end{enumerate}

The following proof uses the same kind of idea as in Sections~\ref{S-proof} and \ref{S-proof shift},
but there is an additional technical twist here in the 
construction of the full group elements in the definition of stability sequence, one that has no
analogue in von Neumann algebra framework of the previous two sections.

\begin{proof}[Proof of Theorem \ref{T-stable}]
Denoting by $\lambda$ the Lebesgue measure on $[0,1]$, we consider 
the p.m.p. action $H\wr_X G\overset{\alpha}{\curvearrowright}(Y,\nu):=( ([0,1]^H)^X,(\lambda^H)^X)
= ( [0,1]^{H\times X},\lambda^{H\times X})$ induced by the given action $G\curvearrowright X$
and the Bernoulli action $H\overset{\gamma}{\curvearrowright} ([0,1]^H , \lambda^H )$, as determined by
$\alpha_{ag} ((y_{s,x} )_{s\in H, x\in X}) = (y_{a_x^{-1} s,g^{-1}x} )_{s\in H, x\in X}$ 
for $a = (a_x )_{x\in X} \in H^{\oplus X}$ and $g\in G$.
By the discussion above, it suffices to show that $\alpha$ admits a stability sequence.
	
To that end, let $F=\{\tilde{h}_ig_i : i\in I\}$ be a finite subset of $H\wr_X G$ with $\tilde{h}_i\in H^{\oplus X}$
and $g_i\in G$ for every $i\in I$. Set $W=\bigcup_{i\in I}\supp\tilde{h}_i$ and $K=\{g_i:i\in I\}$, and take 
an $\eps>0$ such that $2^{-3\eps}>1-|F|^{-1}$. As in the proof of Theorem~\ref{T-McDuff shift}, 
we can find a finite subset $E$ of $X$ that is $(K,\eps)$-invariant and disjoint from $W\cup\bigcup_{i\in I} g_i^{-1}W$. 
	
For $s\in H$ write $\pi_s :[0,1]^H\to[0,1]$ for the projection map onto the coordinate at $s$, and set $U_0=[0,2^{-|E|^{-1}}]$
and $U_1=[0,1]\setminus U_0$. Let $h\in H\setminus\{e\}$ and consider the map $\omega :[0,1]^H\to H$ that is equal to $h$ on $Z:=\pi_e^{-1}(U_0)\cap\pi_h^{-1}(U_1)$, to $h^{-1}$ on the image of $Z$ under the shift $\gamma_h$
(which is disjoint from $Z$), and to $e$ otherwise.
Define $T:Y\to H^{\oplus{X}}\subseteq H\wr_XG$ by declaring,
for all $y\in Y$ and $x\in X$, that
\[
	T(y)(x)=
	\begin{cases}\omega (y_x), & x\in E,\\
	e, & \text{otherwise}.
	\end{cases}
\]
By construction, $T$ is an element of the full group of $\alpha$. Set 
\[
A=\big\{ (y_x )_{x\in X} \in Y:y_x \in \pi^{-1}_e (U_0) \text{ for all } x\in E \big\} .
\]
	
Because $E$ is disjoint from $W$, both $T$ and its image are invariant under the action (via $\alpha$ and by conjugation, respectively) of any element of $H^{\oplus X}$ supported on $W$. Let $g\in K$, denote by $Y_g$ the set of 
all $y\in Y$ such that $T(\alpha_g(y))=gT(y)g^{-1}$, and set 
\[
	C=\big\{ (y_x )_{x\in X} \in Y:y_x \in\pi^{-1}_e(U_0)\cap\pi^{-1}_h(U_0)\cap\pi^{-1}_{h^2}(U_0)
	\text{ for all } x\in E\Delta g^{-1}E\big\} .
\]
One can easily check that $C\subseteq Y_g$, and therefore
\[
	\nu(Y_g)\geq\nu(C)\geq(2^{-|E|^{-1}})^{3\eps|E|}\geq1-\frac{1}{|F|}.
\]
Moreover, by construction we have $T_0(B)=B$ for all measurable rectangles $B$ such that $\ev_x(B)=[0,1]^H$ for all $x\in E$, where $\ev_x$ denotes the evaluation map at $x$.
		
Finally, we have
\[
	\nu(T_0(A)\Delta A)=2(\nu(A)-\nu(T_0(A)\cap A))=2\Big(\frac{1}{2}-\frac{1}{4}\Big)=\frac{1}{2}
\]
and, for $t\in F$,
\begin{align*}
	\nu(\alpha_{t}(A)\Delta A)&=2(\nu(A)-\nu(\alpha_{t}(A)\cap A))\leq2\Big(\frac{1}{2}-\Big(\frac{1}{2}\Big)^{(1+\eps)}\Big)\leq\frac{1}{|F|}.
\end{align*}
	
By constructing such $A$ and $T$ with respect to an increasing sequence of finite sets $F_n$ such that 
$\bigcup_nF_n= H\wr_X G$, we obtain a stability sequence for $\alpha$.
\end{proof}

\end{document}